\newtheorem{theorem}{Theorem}[section]
\newtheorem{corollary}[theorem]{Corollary}
\newtheorem{question}[theorem]{Question} 
\newtheorem*{maintheorem}{Theorem~\ref{thm:generalsliceness}}
\theoremstyle{definition}
\newtheorem{remark}[theorem]{Remark}
\newtheorem{problem}[theorem]{Problem}
\newcommand{\blue}[1]{\textcolor{blue}{#1}}
\newcommand{\red}[1]{\textcolor{red}{#1}}
\newcommand{\Z}{{\mathbb {Z}}}
\title{Slice obstructions from genus bounds in definite 4-manifolds}
\author{Paolo Aceto, Nickolas A.\@ Castro, Maggie Miller,\\ JungHwan Park, Andr\'as Stipsicz}
\address {Université de Lille, Lille 59000, France}
\email{paoloaceto@gmail.com }
\address{Rice University, Houston TX 77005, USA}
\email{ncastro.math@gmail.com}
\address{Stanford University, Stanford CA 94301, USA}
\email{maggie.miller.math@gmail.com}
\address {Korea Advanced Institute of Science and Technology, Daejeon 34141, Republic of Korea}
\email{jungpark0817@kaist.ac.kr}
\address{R\'{e}nyi Institute of Mathematics, Budapest 1053, Hungary}
\email{stipsicz.andras@renyi.mta.hu}
\thanks{
MM is supported by a Clay Research Fellowship and a Stanford Science Fellowship. 
JP is partially supported by Samsung Science and Technology Foundation (SSTF-BA2102-02) and the POSCO TJ Park Science Fellowship. AS was partially supported by the {\it \'Elvonal} grant NKFIH KKP126683.
}
\begin{document}

\begin{abstract}
We discuss an obstruction to a knot being smoothly slice that comes from minimum-genus bounds on smoothly embedded surfaces in definite 4-manifolds. As an example, we provide an alternate proof of the fact that the (2,1)-cable of the figure eight knot is not smoothly slice, as shown by Dai--Kang--Mallick--Park--Stoffregen in 2022. The main technical input of our argument consists of gauge-theoretic obstructions to smooth small-genus surfaces representing certain homology classes in $\mathbb{CP}^2\#\mathbb{CP}^2$ proved by Bryan in the 1990s.
\end{abstract}
\maketitle

\section{Introduction}\label{sec:background}
Unless stated otherwise, all manifolds and maps in this paper are taken to be smooth. Given a closed, oriented, 4-manifold $X$ we define the 
\emph{minimal genus function} $g_X\colon H_2(X; {\mathbb {Z}})\to {\mathbb {Z}}^{\ge 0}$ 
by
\[g_X(\alpha)=\min\left\{g\mid\text{there is a smooth embedding $i\colon \Sigma_g\to X$ with $i_*\left([\Sigma_g]\right)=\alpha$}\right\},\] where $\Sigma_g$ denotes the closed, oriented surface of genus $g$. 

Although it is expected that $g_X$ encodes important information 
about the smooth topology of $X$, we know the genus function only in a handful of cases; for example, for
${\mathbb {CP}}^2$~\cite{KM94} and for the 
two $S^2$-bundles over $S^2$~\cite{Ruberman}.
In these results gauge-theoretic tools (notably the Seiberg-Witten (SW) invariants) are used.
In many other cases some partial information for $g_X$ are available, mostly resting on 
the adjunction inequality for SW invariants (see for example \cite{Li-Li:1998}).

In this paper, we prove the following main result by relating the slice genus of some knots to small values of the minimal genus function on $2\mathbb{CP}^2$. We will observe that as a corollary of this theorem, the $(2,1)$-cable of the figure eight knot is not slice.

\begin{maintheorem}
Suppose that the knot $K$ can be turned into a slice knot $($e.g., the unknot$)$ by applying whole negative twists to $K$ along disjoint disks $D_1, D_2$ 
where $D_1$ intersects $K$ algebraically once and and $D_2$ intersects $K$ algebraically three times. 
Then the $(2,1)$-cable of $K$ is not slice.
\end{maintheorem}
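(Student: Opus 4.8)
The plan is to argue by contradiction: assume the $(2,1)$-cable $K_{2,1}$ is slice, and manufacture from this assumption a smoothly embedded \emph{sphere} in $2\mathbb{CP}^2$ representing a homology class that, by Bryan's genus bounds, admits no sphere. The heart of the matter is translating ``twisting a knot'' into ``blowing up'', so that the sliceness hypotheses become surfaces in a definite $4$-manifold.

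First I would recall the dictionary between twisting and blowing up. A single negative full twist of a knot $J$ along a disk $D$ bounded by an unknot $U$ is realized by attaching a $+1$-framed $2$-handle along $U$; since $U$ is unknotted the resulting trace $C$ is a copy of $\mathbb{CP}^2\setminus(\mathring B^4\sqcup\mathring B^4)$, its two ends are copies of $S^3$ containing $J$ and its twist, and the product region away from the handle supplies an annulus in $C$ from $J$ to the twisted knot. The co-core of the handle, capped off by a pushed-in parallel copy of $D$, is a sphere of self-intersection $+1$ generating the new $\mathbb{CP}^2$ summand, and its algebraic intersection with any surface co-bounding $J$ equals $\mathrm{lk}(U,J)$. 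I should be careful with the sign convention here so that negative twists produce \emph{positive} summands.

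Now suppose $K_{2,1}$ bounds a disk $\Delta$ in $B^4$. Applying the above to the disks $D_1,D_2$ (one negative full twist along each, as in the hypothesis) produces a cobordism $C=2\mathbb{CP}^2\setminus(\mathring B^4\sqcup\mathring B^4)$ from $(S^3,K_{2,1})$ to $(S^3,(K')_{2,1})$, where $K'$ denotes the slice knot obtained from $K$. Because $K'$ is slice, its $(2,1)$-cable $(K')_{2,1}$ is slice as well (a satellite with slice companion and unknotted pattern $T(2,1)$), so it bounds a disk $\Delta'$ in a second copy of $B^4$. Gluing $\Delta$, the annulus inside $C$ joining $K_{2,1}$ to $(K')_{2,1}$, and $\Delta'$ along their common boundaries yields a closed surface $\Sigma$ in $B^4\cup C\cup B^4=2\mathbb{CP}^2$. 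As each piece has genus $0$, $\Sigma$ is a sphere. Its class is determined by its intersections with the two $+1$-spheres, which equal the linking numbers of $K_{2,1}$ with $U_1,U_2$; since the cable has winding number $2$, these are $2(K\cdot D_1)=2$ and $2(K\cdot D_2)=6$, so $[\Sigma]=2h_1+6h_2$ for generators $h_i$ with $h_i^2=1$.

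The theorem then follows from Bryan's gauge-theoretic bound: the class $2h_1+6h_2$ is not represented by a smoothly embedded sphere in $2\mathbb{CP}^2$, contradicting the existence of $\Sigma$, so $K_{2,1}$ cannot be slice. I expect the main obstacle to be exactly this final step: confirming that the class $2h_1+6h_2$ lies in the range where Bryan's obstruction is nonvacuous and excludes genus $0$, together with the orientation bookkeeping ensuring that the negative twists yield the positive summands $2\mathbb{CP}^2$ (and not $2\overline{\mathbb{CP}^2}$) to which Bryan's theorem applies. By comparison the topological inputs — the twist/blow-up correspondence and the sliceness of $(2,1)$-cables of slice knots — are routine.
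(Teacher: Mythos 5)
There is a genuine gap, and it sits at the exact point your proposal treats as routine: the identification of the far end of the annulus in $C$ as $(K')_{2,1}$. When you cable the annulus $A\subset C=2\mathbb{CP}^2\smallsetminus\left(\mathring{B}^4\sqcup\mathring{B}^4\right)$ running from $K$ to the slice knot $K'$, you must track the framing: the $0$-framing of $K$ extends over $A$ to induce the $-[A]\cdot[A]$-framing on $K'$, and here $[A]=(1,3)$ gives $[A]\cdot[A]=10$. Consequently the $2$-cable of $A$ whose pattern is $T(2,1)$ at the $K$ end has as its other boundary the $(2,1+2\cdot(-10))=(2,-19)$-cable of $K'$, not the $(2,1)$-cable. (Equivalently: a full twist along $D_i$ applied to $K_{2,1}$ also twists the two parallel strands of the cable around each other, so it does not carry $K_{2,1}$ to $(K')_{2,1}$.) The knot $(K')_{2,-19}$ is concordant to $T(2,-19)$, which is not slice; its smooth four-ball genus is $9$. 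So the closed surface you can actually build in $2\mathbb{CP}^2$ representing $(2,6)$ has genus $9$, not genus $0$, and your appeal to the sliceness of $(2,1)$-cables of slice knots, while true as stated, is applied to the wrong knot.

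Because of this, the conclusion cannot be reached by the comparatively soft fact that $(2,6)$ admits no spherical representative; one needs the sharp bound $g_{2\mathbb{CP}^2}(2,6)=10$ of Theorem~\ref{thm:JBresult} (Bryan's refinement of the $10/8$-theorem applied to the branched double cover together with its involution), which exceeds $9$ by exactly one. This is precisely how the paper's proof of Theorem~\ref{thm:fig8}, and hence of Theorem~\ref{thm:generalsliceness}, runs: cap the cabled annulus with the hypothesized slice disk for $K_{2,1}$ on one side and with a genus-$9$ surface bounded by a knot concordant to $T(2,-19)$ on the other, and contradict the genus-$10$ bound. The remaining ingredients of your outline --- the twist/blow-up dictionary, the sign conventions making negative twists yield $+1$-framed handles and hence $\mathbb{CP}^2$ summands, and the computation of the class $(2,6)$ from winding number $2$ and the algebraic intersection numbers $1$ and $3$ --- do agree with the paper.
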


It is considered much harder to get precise values of
$g_X$ for manifolds with vanishing SW invariants, for example
for $2{\mathbb {CP}}^2={\mathbb {CP}}^2\# {\mathbb {CP}}^2$ -- but it is not impossible.
For example, the class $(1,3)\in H_2(2{\mathbb {CP}}^2; {\mathbb {Z}})\cong {\mathbb {Z}}\oplus {\mathbb {Z}}$ can be represented by a torus (as
the smooth cubic curve in ${\mathbb {CP}}^2$ is a torus), but
not by a 2-sphere for the following reason: 
if this class admits a spherical representative, then we can blow it up 
9 times and obtain a sphere with self-intersection 1 in 
$2{\mathbb {CP}}^2\# 9 {\overline {\mathbb {CP}}}^2$ representing the homology class $(1,3,1,\ldots, 1)$. This 2-sphere is characteristic in a simply connected 4-manifold, so its complement is spin. Blowing the 2-sphere 
down, we get a spin 4-manifold with signature $-8$, contradicting Rokhlin's famous result about the signature of a
spin 4-manifold being divisible by 16.

In a similar manner, if $(p,q)\in H_2(2{\mathbb {CP}}^2; {\mathbb {Z}})$ is represented by a 2-sphere and both $p$ and 
$q$ are odd, then we must have $p=\pm 1$
and $q=\pm 1$: blow up the 2-sphere $p^2+q^2-1$ times, and then blow it down. The parity assumption again shows that the result
is a spin manifold with $b_2^+=1$, hence by Donaldson's Theorem B we have that the resulting manifold has $b_2^-=1$, so $p^2+q^2-1=1$, implying the claim. 

It is somewhat more complicated to get further lower bounds on 
minimal genera of other classes. As an example, consider a surface $\Sigma_g\subset 2 {\mathbb {CP}}^2$ 
representing a class
of the form $(2p,2q)\in H_2(2{\mathbb {CP}}^2; {\mathbb {Z}})$
with $p,q$ odd. This homology condition ensures that the double cover of $2{\mathbb {CP}}^2$ branched along $\Sigma_g$  
is a spin manifold, hence the $\frac{10}{8}$-theorem of Furuta~\cite{Furuta:2001} can be 
applied and (as the Euler characteristic of the double involves
the genus of $\Sigma _g$, while the signature involves
$[\Sigma _g]^2$) we get a lower bound on the genus $g$.
This idea has been further developed by J. Bryan in \cite{JB},
where the ${\mathbb {Z}}/2{\mathbb {Z}}$-action on the double
branched cover has been also taken into account, resulting
in sharper lower bounds, proving in particular the following theorem.
\begin{theorem}[{\cite[Corollary~1.7]{JB}}]
\label{thm:JBresult}
The minimal genus of a surface representing the class
$(2,6)$ in $2{\mathbb {CP}}^2$ is 10.
\end{theorem}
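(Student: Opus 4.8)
The plan is to establish matching upper and lower bounds of $10$. For the \emph{upper bound} I would produce an explicit genus-$10$ representative: take a smooth conic (hence a $2$-sphere, of genus $\frac{(2-1)(2-2)}{2}=0$) in the class $(2,0)$ supported in the first summand, take a smooth sextic curve (of genus $\frac{(6-1)(6-2)}{2}=10$) in the class $(0,6)$ supported in the second summand, push them off the connect-sum neck so that they are disjoint, and join them by a single embedded tube. Ambient connected sum adds genera, so the result is a connected embedded surface of genus $10$ representing $(2,0)+(0,6)=(2,6)$.

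The substance is the \emph{lower bound} $g\geq 10$. Since $(2,6)=2(1,3)$ is divisible by $2$, any embedded genus-$g$ surface $\Sigma$ in this class is the branch locus of a double cover $\pi\colon Y\to 2\mathbb{CP}^2$, and I would first record the standard invariants of $Y$. From $\sigma(Y)=2\sigma(2\mathbb{CP}^2)-\tfrac12[\Sigma]^2$ with $[\Sigma]^2=2^2+6^2=40$ one gets $\sigma(Y)=4-20=-16$, and $\chi(Y)=2\chi(2\mathbb{CP}^2)-\chi(\Sigma)=8-(2-2g)=6+2g$. As noted in the excerpt, since both entries of $(1,3)$ are odd the cover $Y$ is spin; moreover $[\Sigma]$ has divisibility $2$ in $H_2$, so the branched cover is simply connected and $b_2(Y)=\chi(Y)-2=4+2g$. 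A direct application of Furuta's $\tfrac{10}{8}$-theorem, $b_2(Y)\geq\tfrac{10}{8}\,|\sigma(Y)|+2=22$, then yields only $g\geq 9$, which is off by one from the claim.

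To gain the missing unit I would exploit the deck transformation $\tau$ of the branched cover, a spin involution whose fixed-point set is the lifted branch surface $\widetilde\Sigma\cong\Sigma$ (with self-intersection $\tfrac12[\Sigma]^2=20$ in $Y$). Following Bryan, I would enlarge the symmetry of the Seiberg--Witten/Dirac monopole map from $\mathrm{Pin}(2)$ by the extra $\mathbb{Z}/2$ generated by $\tau$, split the spinor bundles into the $\pm1$-eigenspaces of $\tau$, compute the two equivariant Dirac indices via the $G$-spin (Atiyah--Singer/Atiyah--Hirzebruch) fixed-point formula applied to $\widetilde\Sigma$, and then run Furuta's finite-dimensional approximation equivariantly. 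The refined inequality obtained from this eigenspace bookkeeping sharpens $\tfrac{10}{8}$ by exactly the amount needed to force $b_2(Y)\geq 24$, i.e.\ $g\geq 10$. Together with the construction of the first paragraph, this pins the minimal genus at $10$.

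The main obstacle is precisely this last step: correctly identifying the weights of the $\tau$-action on the $\pm1$-eigenspaces of the Dirac operator and feeding them through the equivariant $K$-theoretic degree argument so that the improvement from $9$ to $10$ actually emerges, rather than a weaker bound. The signature and Euler-characteristic computations, the spin and simple-connectivity checks, and the genus-$10$ construction are all routine; all of the real content lives in the equivariant $\tfrac{10}{8}$ inequality of \cite{JB}.
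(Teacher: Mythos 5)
Your proposal is correct and follows essentially the same route as the paper, which does not reprove this statement but cites Bryan's Corollary~1.7 for the lower bound and exhibits the connected sum of the degree-$2$ and degree-$6$ complex curves as the genus-$10$ representative; your numerology (spin double branched cover with $\sigma=-16$, plain Furuta giving only $g\geq 9$, the $\mathbb{Z}/2$-equivariant refinement supplying the last unit) is exactly the sketch given in Section~\ref{sec:background}. The one step you correctly flag as a black box --- the equivariant $\tfrac{10}{8}$ bookkeeping --- is precisely the content outsourced to \cite{JB} in the paper as well.
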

Indeed, a genus-10 representative of the class $(2,6)$ can
be given by the connected sum of the complex surfaces
in the ${\mathbb {CP}}^2$-summands of degrees $2$ and 
$6$, respectively.
This result naturally leads to a more general question.

\begin{question}\label{2cp2question}
If $m,n>0$, is the connected sum of complex surfaces of degrees $m$ and $n$ representing the homology class $(m,n)$ in $2{\mathbb {CP}}^2$ a smoothly minimum-genus surface?
\end{question}

The answer to Question~\ref{2cp2question} is known to 
be ``yes" for some specific small $m,n$, for example ``yes" if $(m,n)=
(2,6)$ by Theorem~\ref{thm:JBresult},  and ``no" for larger values, for example whenever $n>3m$ by \cite{mmrs}. 


We rule out $m$ or $n$ being zero to avoid trivialities. For $n>2$, it is a simple exercise to check that a degree-$n$ surface in $\mathbb{CP}^2$ does not give a minimum-genus surface representing the homology class $(0,n)$ when included into $2{\mathbb {CP}}^2$, that is, $g_{2\mathbb{CP}^2}(0,n)<g_{\mathbb{CP}^2}(0)+g_{\mathbb{CP}^2}(n)$. Indeed, observe that there is an immersed 2-sphere in $\mathbb{CP}^2$ obtained by considering $n$ distinct copies of $\mathbb{CP}^1$ that represents the homology class $n$ and has $\frac{1}{2}n(n-1)$ positive self-intersections. 
We remove all but one self-intersection by increasing 
the genus of the surface, and then remove the final self-intersection by connect-summing $\mathbb{CP}^2$ to the ambient 
manifold and replacing the two local sheets of the surface 
near the remaining self-intersection with oppositely-oriented 
copies of $\mathbb{CP}^1-\mathring{D}^2$. The result is a connected surface $\Sigma$ representing the homology class $(0,n)$ in $2\mathbb{CP}^2$. Since $\Sigma$ is obtained from $n$ spheres by attaching $\frac{1}{2}n(n-1)-1$ tubes, the genus of $\Sigma$ is \[g(\Sigma)=\left(\frac{n(n-1)}{2}-1\right)-(n-1)=\frac{n^2-3n}{2}=\frac{(n-1)(n-2)}{2}-1=g_{\mathbb{CP}^2}(0)+g_{\mathbb{CP}^2}(n)-1.\]

\section{Obstructing sliceness of knots}\label{sec:slicing}
It is known that finding the minimal genus of a homology class in a 4-manifold and determining sliceness properties of knots in $S^3$ are closely related. 
After all, finding the slice genus of a knot is a 
special (relative) case of a minimal genus problem.
However, it is slightly more surprising that minimal 
genus results for closed 4-manifolds can be used to 
obstruct sliceness of knots in $S^3$ --- in the following 
we will show such an argument.

In the summer of 2022, Dai--Kang--Mallick--Park--Stoffregen \cite{dkmps} answered a long-standing open question: is the $(2,1)$-cable $E_{2,1}$ of the figure eight knot $E$ slice? In Figure~\ref{fig:fig8_surgery} we illustrate $E$ and its $(2,1)$-cable. The knot $E$ is not slice, but the connected sum $E\# E$ is slice. This might motivate one to believe that $E_{2,1}$ is slice, since it also ``looks like" two copies of $E$ glued together. Moreover, all classical knot concordance invariants fail to obstruct sliceness of $E_{2,1}$. However, a theorem of Miyazaki from the early 1990s \cite{miyazaki} implies that $E_{2,1}$ is not ribbon. Thus, $E_{2,1}$ was perhaps the simplest potential counterexample to the Slice-Ribbon conjecture. However, \cite{dkmps} answered this question in the negative, proving that $E_{2,1}$ is not slice via an obstruction arising from the involutive Heegaard Floer homology of its 
double branched cover.

In Figure~\ref{fig:fig8_surgery}, we present an alternate proof of this fact. We make use of the technical result
cited in Theorem~\ref{thm:JBresult}; this technology has been 
in the literature for multiple decades. We consider it 
surprising that the tools necessary to answer this question 
have been at hand for so long (although we also wish to 
emphasize that we are not claiming this proof is technically 
easier than the Floer-theoretic argument).

\begin{theorem}[\cite{dkmps}]\label{thm:fig8}
The $(2,1)$-cable of the figure eight knot is not smoothly slice in $B^4$.
\end{theorem}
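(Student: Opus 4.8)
The plan is to argue by contradiction, producing from a hypothetical slice disk for $E_{2,1}$ a closed genus-$9$ surface in $2\mathbb{CP}^2$ representing the class $(2,6)$, in violation of Theorem~\ref{thm:JBresult}. The geometric input I would record first is a twist description of the figure eight knot: as drawn in Figure~\ref{fig:fig8_surgery}, there is a two-component unlink $c_1\sqcup c_2$ in $S^3$, disjoint from $E$ and bounding disjoint disks $D_1,D_2$, so that $E$ meets $D_1$ algebraically once and $D_2$ algebraically three times, and so that a single negative full twist along each of $D_1,D_2$ unknots $E$. Because the $(2,1)$-cable runs as two parallel copies of $E$, the cable $E_{2,1}$ meets $D_1$ algebraically twice and $D_2$ algebraically six times, and the same two twists carry $E_{2,1}$ to a torus knot $T(2,q)$ (the cable of the unknot, suitably reframed) whose parameter I determine below.

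Next I would realize the twisting four-dimensionally. A negative full twist along an unknot $c_i$ is realized on the boundary by $+1$-surgery, that is, by attaching a $2$-handle to $B^4$ along $c_i$ with framing $+1$; since $c_1\sqcup c_2$ is an unlink, the trace of the two attachments caps off to $X=2\mathbb{CP}^2$, and the two handle cores, completed by the pushed-in disks $D_1,D_2$, give spheres $S_1,S_2$ of square $+1$ generating $H_2(X;\Z)$. Now suppose $E_{2,1}$ bounds a slice disk $\Delta\subset B^4$. I would assemble a closed surface $\Sigma\subset X$ from three pieces: the disk $\Delta$ (genus $0$) in the original $B^4$; the product annulus swept out by $E_{2,1}$ under the two surgeries, a genus-$0$ cobordism from $E_{2,1}$ to the twisted knot $K'$; and a Seifert surface for $K'$ pushed into the final $B^4$.

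It remains to compute the homology class and the genus of $\Sigma$. Since $S_i\cap B^4=D_i$ while the other two pieces of $\Sigma$ lie outside $B^4$, we get $[\Sigma]\cdot[S_i]=\Delta\cdot D_i=\lk(E_{2,1},c_i)$, hence $[\Sigma]\cdot[S_1]=2$ and $[\Sigma]\cdot[S_2]=6$, so $[\Sigma]=(2,6)$. For the genus, the main point is to identify $K'$: twisting the companion to the unknot does \emph{not} produce the unknot, because a negative full twist along a disk meeting the companion algebraically $a$ times shifts the cabling coefficient by $-2a^2$. With $a=1$ and $a=3$ the coefficient moves from $1$ to $1-2-18=-19$, so $K'=T(2,-19)$, whose Seifert genus is $9$. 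Thus $\Sigma$ is a closed, genus-$9$ surface representing $(2,6)$ in $2\mathbb{CP}^2$, contradicting the minimal-genus bound of Theorem~\ref{thm:JBresult}; hence $E_{2,1}$ is not slice.

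I expect the heart of the argument to be the bookkeeping in the last paragraph. Tracking the cabling coefficient correctly under the twists, and in particular getting its sign right, is essential: a positive twist would instead yield $K'=T(2,21)$ of genus $10$ and no contradiction. It is precisely the numerical coincidence $9<10$, forced by the intersection numbers $1$ and $3$ together with Bryan's sharp bound, that makes the obstruction work; verifying the twist picture of $E$ in Figure~\ref{fig:fig8_surgery} is the remaining input.
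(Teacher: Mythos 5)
Your argument is correct and follows essentially the same route as the paper: both produce an annulus in $2\mathbb{CP}^2$ minus two balls from $E_{2,1}$ to $T(2,-19)$ representing the class $(2,6)$, cap it off with the hypothetical slice disk and a genus-$9$ surface bounded by $T(2,-19)$, and contradict Bryan's bound $g_{2\mathbb{CP}^2}(2,6)=10$. The only cosmetic difference is that you realize the twists by $+1$-framed $2$-handles and track the cabling coefficient twist-by-twist ($-2a^2$ each), whereas the paper first builds the annulus $A$ from $E$ to the unknot in class $(1,3)$ and then cables it, computing the framing change all at once as $-[A]\cdot[A]=-10$; both yield the same $T(2,-19)$.
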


\begin{proof}[New proof]
First we give a new argument of the standard fact that $E$ is not smoothly slice in $B^4$. Consider the left half of Figure~\ref{fig:fig8_surgery}. Starting from the top left diagram of $E$ and move left to right and then top to bottom, we illustrate an annulus $A$ in 
\[
X:=2{\mathbb {CP}}^2\smallsetminus\left(\mathring{B}^4\sqcup\mathring{B}^4\right)\cong 2{\mathbb {CP}}^2 \#\left( S^3\times I\right)
\] 
whose boundary is $(-E\times\{0\})\sqcup(U\times\{1\})$, where $U$ is the unknot. To determine the element of $H_2(X,\partial X;\mathbb{Z})$ represented by $A$, we check how many times $A$ algebraically intersects representatives of two generators. Diagrammatically, we achieve this by computing the linking number of a cross-section of $A$ with the attaching circles of 2-handles in $X$. This is visible in the middle figure of the first column of Figure~\ref{fig:fig8_surgery}. We see that $A$ represents the homology class $(1,3)$ in $H_2(X,\partial X;\mathbb{Z})\cong H_2(2\mathbb{CP}^2;\mathbb{Z})=\mathbb{Z}\oplus\mathbb{Z}$.

Of course, $U$ bounds a smooth disk in the 4-ball. If $E$ also bounded a smooth disk in the 4-ball, then we could cap off $A$ with two disks to obtain a smooth 2-sphere in $2{\mathbb {CP}}^2$ representing the homology class $(1,3)$. This contradicts the discussion in Section~\ref{sec:background}, where we argued that this homology class cannot be represented by a smooth 2-sphere. We conclude that $E$ is not smoothly slice.

Now we move onto the $(2,1)$-cable $E_{2,1}$. Consider the right half of Figure~\ref{fig:fig8_surgery}. Starting from the top left diagram of $E_{2,1}$ and moving left to right and then top to bottom, we illustrate an annulus $A'$ in $X$ whose boundary is $-(E_{2,1})\times\{0\})\sqcup(T(2,-19)\times\{1\})$. The annulus $A'$ is itself a cable of $A$; since the $0$-framing of $-E$ extends over $A$ inducing the $-[A]\cdot [A]=-10$-framing on $U$, we can cable $A$ so that one boundary of the resulting annulus $A'$ is the $(2,1)$-cable of $-E$ while the other is the $(2,1+2\cdot(-10))$-cable of $U$, i.e., the torus knot $T(2,-19)$.

As before, we find the element of $H_2(X,\partial X;\mathbb{Z})$ represented by $A'$ by computing the linking number of a cross-section of $A'$ with the attaching circles of 2-handles in $X$. This is visible in the middle figure of the third column of Figure~\ref{fig:fig8_surgery}. We see that $A'$ represents the homology class $(2,6)$ in $\mathbb{Z}\oplus\mathbb{Z}$.

The torus knot $T_{2,-19}$ bounds a smooth genus-9 surface in $B^4$, so if $E_{2,1}$ were smoothly slice then we could cap off $A'$ with a disk and a genus-9 surface to obtain a smooth genus-9 surface in $2{\mathbb {CP}}^2$ representing the homology class $(2,6)$. This contradicts 
Theorem~\ref{thm:JBresult}, so we conclude that $E_{2,1}$ is not smoothly slice.
\end{proof}

\begin{figure}
    \centering
    \labellist
    \small\hair 2pt
    \pinlabel {{\red{1}}} at 45 118
    \pinlabel {{\red{1}}} at 60 157.5
    \pinlabel {{\red{1}}} at 248.5 118
    \pinlabel {{\red{1}}} at 263.5 157.5
    \pinlabel \rotatebox{10}{{\blue{10}}} at 274.5 69.5
    \pinlabel \rotatebox{27}{{\blue{10}}} at 368 68
    \pinlabel \rotatebox{3}{{\blue{5}}} at 330 179.2
    \pinlabel \rotatebox{3}{{\blue{5}}} at 347 116.3
    \endlabellist
    \includegraphics[width=120mm]{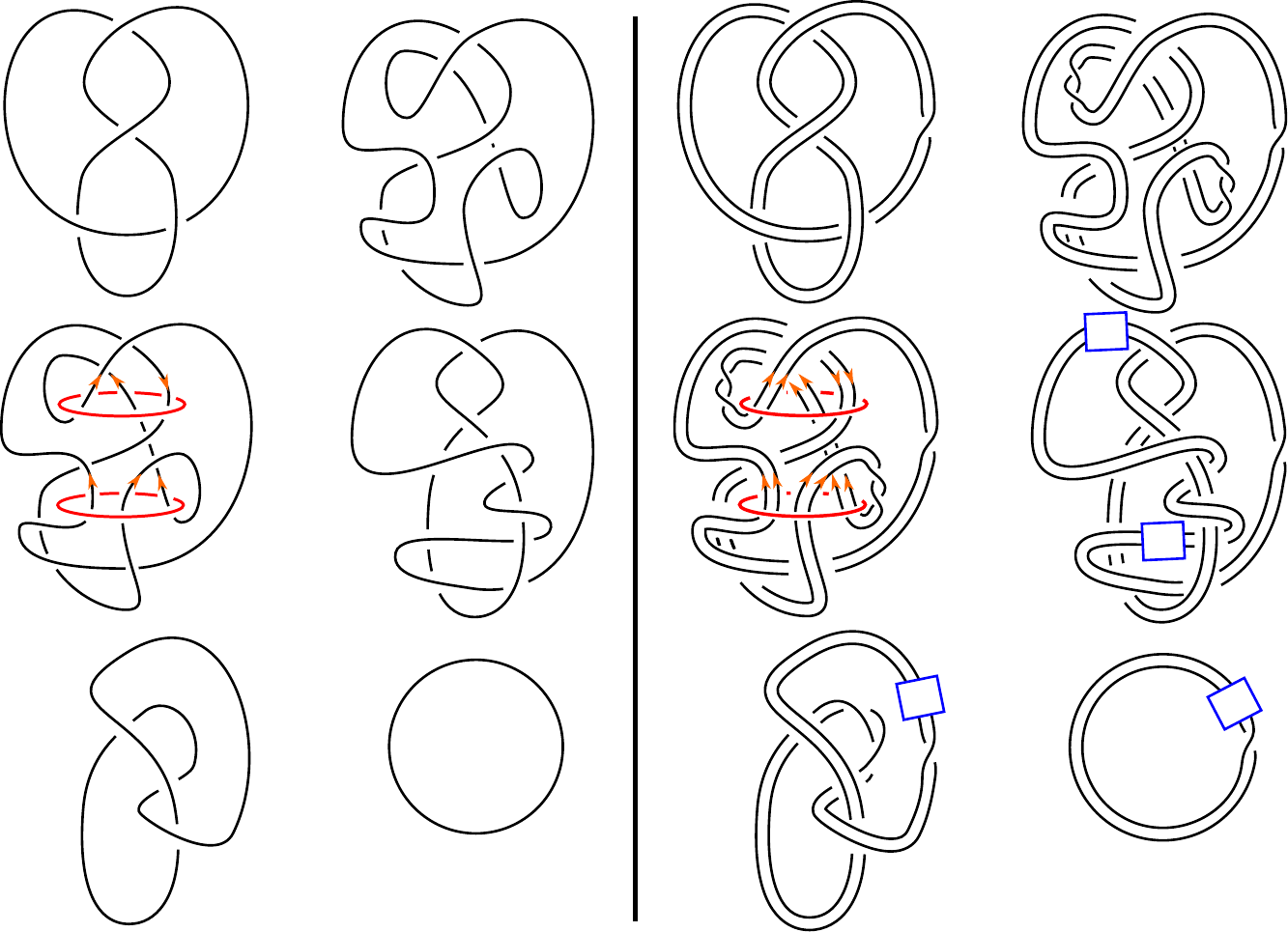}
    \caption{Illustration of the Proof of Theorem~\ref{thm:fig8}. Numbered boxes indicate numbers of whole, negative twists.}
    \label{fig:fig8_surgery}
\end{figure}

\begin{remark}
    In the proof of Theorem~\ref{thm:fig8}, we begin by constructing a disk bounded by the figure eight knot in $2\mathbb{CP}^2\smallsetminus\mathring{B}^4$, obtained by performing surgeries with the effect of removing one whole twist from one of the twist boxes in the standard diagram of the figure eight knot. This part of the construction essentially comes from Ballinger's paper on configurations of 2-spheres in $\#_n\mathbb{CP}^2$ \cite{ballinger}. Ballinger produces a 2-sphere representing the homology class $(1,3,1,3)\in H_2(4\mathbb{CP}^2;\mathbb{Z})$. His construction can be rephrased as constructing the already-mentioned disk bounded by the figure eight in $2\mathbb{CP}^2$ and another disk bounded by the figure eight knot into $2\overline{\mathbb{CP}}^2$, making use of the fact that if we {\emph{add}} (instead of remove) a twist to the figure eight knot, we can obtain the Stevedore knot, which is also slice. Gluing these two disks together along their boundary (and taking care with orientations) yields a 2-sphere in $4\mathbb{CP}^2$. The new input of Theorem~\ref{thm:fig8} is to consider cabling this disk in $2\mathbb{CP}^2\smallsetminus\mathring{B}^4$.
\end{remark}

Via the method of the proof of Theorem~\ref{thm:fig8}, we obtain the following more general result. 

\begin{theorem}\label{thm:generalsliceness}
Suppose that the knot $K$ can be turned into a slice knot $($e.g., the unknot$)$ by applying whole negative twists to $K$ along disjoint disks $D_1, D_2$ 
where $D_1$ intersects $K$ algebraically once and and $D_2$ intersects $K$ algebraically three times. 
Then the $(2,1)$-cable of $K$ is not slice.
\end{theorem}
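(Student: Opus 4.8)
The plan is to run the argument in the proof of Theorem~\ref{thm:fig8} essentially verbatim, replacing the two explicit twist regions in the figure eight diagram by the abstract disks $D_1,D_2$ and the numbers $1,3$ read off from Figure~\ref{fig:fig8_surgery} by the hypothesized algebraic intersection numbers $\omega_1=1$ and $\omega_2=3$. First I would realize the negative twisting operations four-dimensionally. Let $c_i=\partial D_i$; each $c_i$ is an unknot in $S^3$ meeting the spanning disk $D_i$ so that $K$ intersects $D_i$ algebraically $\omega_i$ times. Realize the whole negative twist along $D_i$ as a $+1$-framed two-handle attached along $c_i$: since $c_i$ is unknotted, the resulting cobordism has both ends $S^3$ (as $+1$-surgery on an unknot gives $S^3$) and contributes a $\mathbb{CP}^2$-summand, namely a sphere $S_i$ of self-intersection $+1$, while the strands of $K$ running through $D_i$ acquire one whole negative twist (by the Rolfsen-twist/blow-down description). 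Performing this for $D_1$ and $D_2$ builds $X:=2\mathbb{CP}^2\smallsetminus(\mathring B^4\sqcup\mathring B^4)\cong 2\mathbb{CP}^2\#(S^3\times I)$, with $K$ on one $S^3$ boundary and the slice knot $K'$ on the other, and the trace of $K$ through the twisting isotopy is an embedded annulus $A\subset X$ with $\partial A=(-K)\sqcup K'$.

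Next I would compute $[A]\in H_2(X,\partial X;\Z)\cong H_2(2\mathbb{CP}^2;\Z)=\Z\oplus\Z$. With $S_1,S_2$ the two sphere generators (each of square $+1$), the algebraic intersection $[A]\cdot S_i$ equals the linking number of a cross-section of $A$ with $c_i$, which is exactly the algebraic intersection number of $K$ with $D_i$; hence $[A]\cdot S_1=1$, $[A]\cdot S_2=3$, and since the intersection form is $\mathrm{diag}(1,1)$ we obtain $[A]=(1,3)$, just as in the figure eight case. This homological bookkeeping is the step I expect to require the most care: in the proof of Theorem~\ref{thm:fig8} it is visible from the explicit picture in Figure~\ref{fig:fig8_surgery}, whereas here one must verify the signs and, crucially, the claim that the product (i.e.\ $0$-) framing of $-K$ extends across $A$ directly from the abstract handle description rather than from a diagram.

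Finally I would cable, exactly as in the proof of Theorem~\ref{thm:fig8}. Since the $0$-framing of $-K$ extends over $A$, it induces the $-[A]\cdot[A]=-10$-framing on $K'$; cabling $A$ then yields an annulus $A'\subset X$ with $\partial A'=(-K_{2,1})\sqcup K'_{2,-19}$ and $[A']=2[A]=(2,6)$, where $K'_{2,-19}$ is the $(2,-19)$-cable of $K'$. Because $K'$ is slice it is smoothly concordant to the unknot, and satellite operations preserve concordance, so $K'_{2,-19}$ is smoothly concordant to $U_{2,-19}=T(2,-19)$; as $T(2,-19)$ bounds a smooth genus-$9$ surface in $B^4$, gluing on the concordance annulus shows $K'_{2,-19}$ does as well. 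Now suppose $K_{2,1}$ were slice. Capping the two ends of $A'$ with a slice disk for $K_{2,1}$ and with this genus-$9$ surface for $K'_{2,-19}$ produces a closed, smoothly embedded surface in $2\mathbb{CP}^2$ representing $(2,6)$; an Euler characteristic count gives that its genus is $9$, contradicting Theorem~\ref{thm:JBresult}, which states that the minimal genus of the class $(2,6)$ is $10$. Hence $K_{2,1}$ is not slice.
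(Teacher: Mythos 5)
Your proposal is correct and follows essentially the same route the paper intends: the paper gives no separate proof of Theorem~\ref{thm:generalsliceness} beyond the remark that it follows ``via the method of the proof of Theorem~\ref{thm:fig8},'' and your argument is precisely that method run abstractly --- realizing each whole negative twist as a $+1$-framed $2$-handle on the unknot $\partial D_i$ to build the annulus $A$ in class $(1,3)$, cabling to get $A'$ in class $(2,6)$ with second boundary the $(2,-19)$-cable of the slice knot $K'$ (concordant to $T(2,-19)$, hence bounding a genus-$9$ surface), and contradicting Theorem~\ref{thm:JBresult}. The one point you flag as delicate --- that the $0$-framing extends over $A$ inducing the $-[A]\cdot[A]$-framing on the far end --- is handled identically in the paper's proof of Theorem~\ref{thm:fig8}, so no new issue arises.
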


    Theorem~\ref{thm:generalsliceness} actually obstructs sliceness in a homotopy 4-ball, since Bryan's work implying Theorem~\ref{thm:JBresult} applies to any simply-connected 4-manifold with the integral homology of $2\mathbb{CP}^2$ (see \cite{JB}).

\begin{remark}
In Theorem~\ref{thm:generalsliceness}, the knot $K$ is also not slice, but this holds more directly by observing that the Arf invariant of $K$ is $8\pmod{16}$. 
To prove this, first observe that if $K'$ is obtained from $K$ by twisting once about $n$ strands algebraically, then up to pass moves \cite{kauffman} the knot $K'$ is obtained from $K$ by twisting about $n$ strands geometrically, all oriented the same direction. This implies that twisting $K$ about $D_1$ preserves the Arf invariant \cite{kauffman}, since the resulting knot is obtainable from $K$ by pass moves. Similarly, twisting about $D_2$ changes the Arf invariant, since twisting about three parallel strands is achievable by a single sharp move \cite{ohyama,murasugiarf}. Since slice knots have vanishing Arf invariant, we conclude the Arf invariant of $K$ is nonvanishing.
\end{remark}

\begin{remark}
    More general consequences of \cite[Theorem 1.6]{JB} can easily be used to improve the statement of Theorem~\ref{thm:generalsliceness}, although we chose not to include them in the main statement as our main examples are of the form in Theorem~\ref{thm:generalsliceness}. For example, suppose that $K$ is transformed into a slice knot by adding whole negative twists about disjoint disks $D_1,\ldots, D_n$ where $n\ge 2$, $D_1$ intersects $K$ algebraically once or thrice and $D_j$ intersects $K$ algebraically thrice for all $j>1$. Applying \cite[Theorem 1.6]{JB} to the class $(2,6,\ldots, 6)$ or $(6,6,\ldots, 6)$ in $H_2(\#_n\mathbb{CP}^2;\mathbb{Z})$, we obtain a minimum genus bound  correspondingly of $9n-8$ or $9n+2$. Cabling the obvious annulus between $K$ and a slice knot yields an annulus in $\#_{n}\mathbb{CP}^2 \# \left(S^3\times I\right)$ from $K_{2,1}$ to a knot concordant to $T(2,1+2\cdot(-1-9(n-1)))=-T(2,18(n-1)+1)$ or $T(2,1+2\cdot(-9n))=-T(2,18n-1)$, which correspondingly bound surfaces of genus $9n-9$ or $9n-1$. Thus, if $D_1$ intersects $K$ algebraically once we conclude that $K_{2,1}$ is not slice; if $D_1$ intersects $K$ algebraically three times then we conclude $K_{2,1}$ has slice genus at least three.
    \end{remark}

\begin{remark}
Special situations in which Theorem~\ref{thm:generalsliceness} are easy to apply include the following.
\begin{enumerate}
\item The knot $K$ can be unknotted by adding one whole negative twist between three strands not all oriented the same direction and adding another whole negative twist between three strands that are all oriented the same direction. This is possible for the figure eight knot, as seen in the left of Figure~\ref{fig:fig8_surgery}.
\item \label{item2} The knot $K$ can be unknotted, more generally turned into a slice knot, by twice adding two whole negative twists between two strands. In the first instance, the strands must be oriented in opposite directions, and in the second instance the two strands must be oriented in the same direction. This is actually a special instance of the first situation, which the reader might deduce from Figure~\ref{fig:fig8_surgery} but we also illustrate this in Figure~\ref{fig:addtwotwists}.
\item\label{item3} In fact, there is an infinite family of such amphicheiral knots. Let $K_n$ be a 2-bridge knot which corresponds to the following presentation.
$$\begin{tikzpicture}[xscale=1.3,yscale=1,baseline={(0,0)}]
    \node at (1, .4) {$2$};
    \node at (2-0.1, .4) {$-n$};
    \node at (3, .4) {$n$};
    \node at (4-0.1, .4) {$-2$};
    \node (A1_1) at (1, 0) {$\bullet$};
    \node (A1_2) at (2, 0) {$\bullet$};
    \node (A1_3) at (3, 0) {$\bullet$};
    \node (A1_4) at (4, 0) {$\bullet$};
\path (A1_1) edge [-] node [auto] {$\scriptstyle{}$} (A1_2);
\path (A1_2) edge [-] node [auto] {$\scriptstyle{}$} (A1_3);
\path (A1_3) edge [-] node [auto] {$\scriptstyle{}$} (A1_4);
  \end{tikzpicture}$$
First, add two whole negative twist between two strands that correspond to the first vertex to get the following knot. Then we perform an isotopy to get  the weighted graph on the right.
$$\begin{tikzpicture}[xscale=1.3,yscale=1,baseline={(0,0)}]
    \node at (1-0.1, .4) {$-2$};
    \node at (2-0.1, .4) {$-n$};
    \node at (3, .4) {$n$};
    \node at (4-0.1, .4) {$-2$};
    \node (A1_1) at (1, 0) {$\bullet$};
    \node (A1_2) at (2, 0) {$\bullet$};
    \node (A1_3) at (3, 0) {$\bullet$};
    \node (A1_4) at (4, 0) {$\bullet$};
\path (A1_1) edge [-] node [auto] {$\scriptstyle{}$} (A1_2);
\path (A1_2) edge [-] node [auto] {$\scriptstyle{}$} (A1_3);
\path (A1_3) edge [-] node [auto] {$\scriptstyle{}$} (A1_4);
  \end{tikzpicture}
  \qquad \cong \qquad
  \begin{tikzpicture}[xscale=1.3,yscale=1,baseline={(0,0)}]
    \node at (1, .4) {$2$};
    \node at (2-0.1, .4) {$-n+1$};
    \node at (3, .4) {$n$};
    \node at (4-0.1, .4) {$-2$};
    \node (A1_1) at (1, 0) {$\bullet$};
    \node (A1_2) at (2, 0) {$\bullet$};
    \node (A1_3) at (3, 0) {$\bullet$};
    \node (A1_4) at (4, 0) {$\bullet$};
\path (A1_1) edge [-] node [auto] {$\scriptstyle{}$} (A1_2);
\path (A1_2) edge [-] node [auto] {$\scriptstyle{}$} (A1_3);
\path (A1_3) edge [-] node [auto] {$\scriptstyle{}$} (A1_4);
  \end{tikzpicture}
  $$
  We iterate this process one more time to get the following knot $K'_n$.
$$\begin{tikzpicture}[xscale=1.3,yscale=1,baseline={(0,0)}]
    \node at (1, .4) {$2$};
    \node at (2-0.1, .4) {$-n+2$};
    \node at (3, .4) {$n$};
    \node at (4-0.1, .4) {$-2$};
    \node (A1_1) at (1, 0) {$\bullet$};
    \node (A1_2) at (2, 0) {$\bullet$};
    \node (A1_3) at (3, 0) {$\bullet$};
    \node (A1_4) at (4, 0) {$\bullet$};
\path (A1_1) edge [-] node [auto] {$\scriptstyle{}$} (A1_2);
\path (A1_2) edge [-] node [auto] {$\scriptstyle{}$} (A1_3);
\path (A1_3) edge [-] node [auto] {$\scriptstyle{}$} (A1_4);
  \end{tikzpicture}
  $$ It is an easy exercise to verify that $K'_n$ is a slice knot for each integer $n$ (see e.g., \cite[Corollary~1.3]{Lisca:2007-1}), and also the fact that the two strands we twist are oriented in the same direction once and in the opposite direction once as in \eqref{item2}. Thus by Theorem~\ref{thm:generalsliceness}, we have that the $(2,1)$-cable of $K_n$ is not slice for each integer $n$. Note that $K_0$ is the figure eight knot and each $K_n$ is a strongly negative-amphicheiral knot. (Here, we are using the fact that every hyperbolic negative-amphicheiral knot is strongly negative amphicheiral~\cite[Lemma~1]{Kawauchi:1979-1}.) In particular, the $(2,1)$-cable of $K_n$ is an example of a knot which is strongly rationally slice but not slice (see e.g., \cite{Kawauchi:2009-1,Cha2007-1,Kang-Park:2022-1}). We point out that because each $K_n$ is alternating  and has Arf invariant $1$, it also follows from \cite{dkmps} that the $(2,1)$-cable of $K_n$ is not smoothly slice.
\end{enumerate}

\end{remark}

\begin{figure}
\centering
\includegraphics[width=80mm]{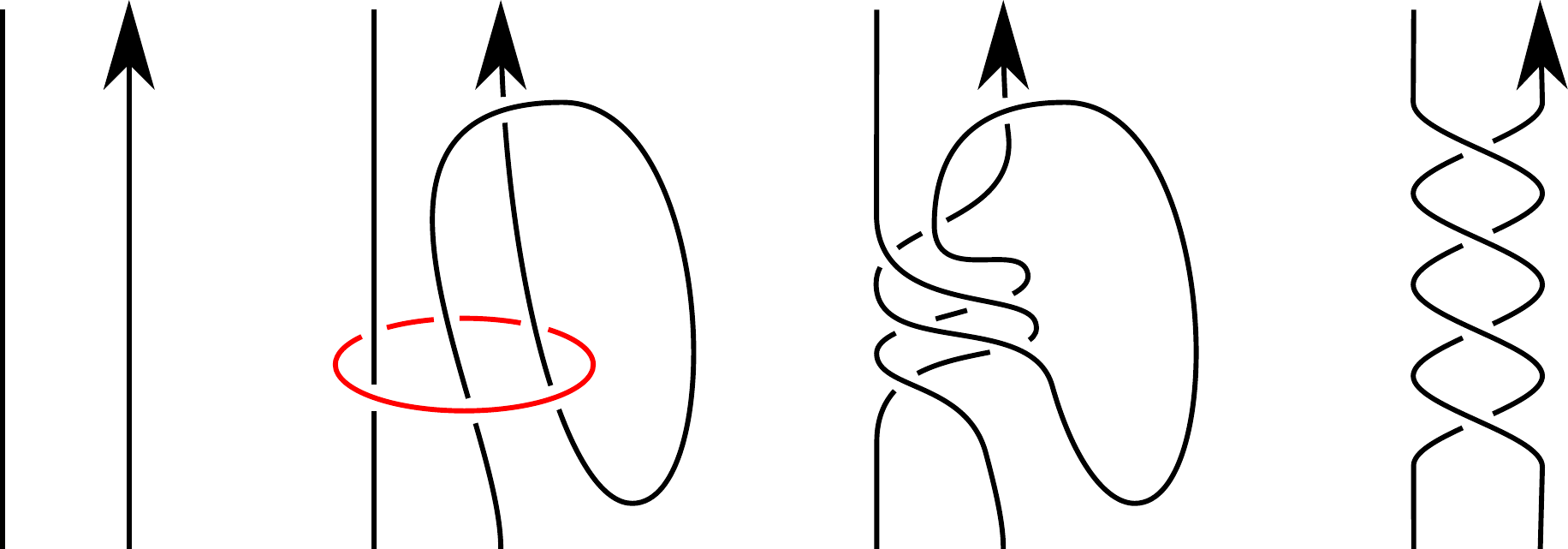}
\caption{On the left we draw two strands in a knot $K$, indicating an orientation on one strand. In the second frame, we isotope $K$ and draw the boundary of a disk intersecting $K$ geometrically in three points. Depending on the orientation of the other strand of $K$, this disk intersects $K$ algebraically once or three times (up to sign). In the third frame, we add a whole negative twist to $K$ along this disk, and then in the fourth frame isotope the resulting knot to see that it is obtained from $K$ by adding two negative whole twists between the original two pictured strands.}\label{fig:addtwotwists}
\end{figure}

The proofs of Theorems~\ref{thm:fig8} and~\ref{thm:generalsliceness} only hold in the smooth category; Theorem~\ref{thm:JBresult} is not true locally flatly since Lee and Wilczy\'{n}ski have constructed a locally flat embedding of a genus-8 surface into $2\mathbb{CP}^2$ that represents the homology class $(2,6)$ \cite{leewilczynski}. Similarly, the original proof of \cite{dkmps} only holds in the smooth category due to its use of Floer homology. We are thus left with the following still-open question.

\begin{question}
    Is the $(2,1)$-cable of the figure eight knot topologically slice?
\end{question}
 By \cite{miyazaki}, $E_{2,1}$ is not homotopy-ribbon, so if $E_{2,1}$ is topologically slice then this would give a counterexample to the topological locally flat version of the slice-ribbon conjecture (which states that topologically slice knots are homotopy-ribbon; see \cite{friedl} for further discussion). A further natural extension of our
 result would address the (2n,1)-cables.

\begin{question}\label{question:highercables}
For $k>1$, is the $(2k,1)$-cable of the figure eight knot slice?
\end{question}
Again, neither our techniques nor those of \cite{dkmps} happen to obstruct sliceness for higher cables, but at least in principle one could hope to use either set of ideas.
The construction in the presented proof of Theorem~\ref{thm:fig8} can be repeated for the $(2k,1)$ cable, yielding an annulus in $2\mathbb{CP}^2\smallsetminus\left(\mathring{B}^4\sqcup\mathring{B}^4\right)$ cobounded by $E_{2k,1}$ and the torus knot $T(2k,1-20k)$, which has 4-ball genus \[\frac{(2k-1)\cdot(20k-2)}{2}=20k^2-12k+1.\] Assuming that $E_{2k,1}$ is slice, we obtain a smooth genus-$(20k^2-12k+1)$ surface in $2\mathbb{CP}^2$ representing the homology class $(2k,6k)$. Observe that the obvious surface representing this homology class, obtained from connect-summing complex surfaces of degrees $2k$ and $6k$ in either summand, has genus 
\begin{equation}\label{eq:cpxgenus}
\frac{(2k-1)(2k-2)}{2}+\frac{(6k-1)(6k-2)}{2}=2k^2-3k+1+18k^2-9k+1=20k^2-12k+2.
\end{equation}
\begin{corollary}\label{question3}
If the answer to Question~\ref{2cp2question} is ``yes" for $m=2k,n=6k$ -- i.e., $g_{2\mathbb{CP}^2}(2k,6k)=g_{\mathbb{CP}^2}(2k)+g_{\mathbb{CP}^2}(6k)$ -- then $E_{2k,1}$ is not slice.
\end{corollary}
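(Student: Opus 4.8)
The plan is to argue by contradiction, adapting verbatim the strategy of Theorem~\ref{thm:fig8} from the $(2,1)$-cable to the $(2k,1)$-cable. Assume, for contradiction, that $E_{2k,1}$ is slice.

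First I would recycle the annulus $A$ from the proof of Theorem~\ref{thm:fig8}. Recall that $A$ sits inside $X=2\mathbb{CP}^2\smallsetminus(\mathring{B}^4\sqcup\mathring{B}^4)$, cobounds $-E$ and the unknot $U$, and represents the class $(1,3)$, so that $[A]\cdot[A]=1^2+3^2=10$. Cabling $A$ by $2k$ parallel strands with respect to the $0$-framing of $-E$---which induces the $-10$-framing on $U$---yields an annulus $A'\subset X$ whose two boundary components are $E_{2k,1}$ and the $(2k,\,1+2k\cdot(-10))$-cable of $U$, i.e., the torus knot $T(2k,1-20k)$. Because cabling multiplies the relative homology class by the number of strands, $A'$ represents $2k\cdot(1,3)=(2k,6k)$ in $H_2(X,\partial X;\mathbb{Z})\cong H_2(2\mathbb{CP}^2;\mathbb{Z})$.

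Next I would cap off $A'$ on both sides. By the resolution of the Milnor conjecture, $T(2k,1-20k)$ bounds a smooth surface in $B^4$ of genus $\tfrac{(2k-1)(20k-2)}{2}=20k^2-12k+1$, while under our assumption $E_{2k,1}$ bounds a smooth disk. Gluing these two surfaces to the boundary circles of $A'$ produces a closed smooth surface $\Sigma\subset 2\mathbb{CP}^2$ representing $(2k,6k)$, whose genus is $20k^2-12k+1$ by an Euler-characteristic count. Hence $g_{2\mathbb{CP}^2}(2k,6k)\le 20k^2-12k+1$.

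Finally I would invoke the hypothesis: if the answer to Question~\ref{2cp2question} is ``yes" for $m=2k$, $n=6k$, then $g_{2\mathbb{CP}^2}(2k,6k)=g_{\mathbb{CP}^2}(2k)+g_{\mathbb{CP}^2}(6k)=20k^2-12k+2$ by \eqref{eq:cpxgenus}. This strictly exceeds the upper bound $20k^2-12k+1$ from the previous step, a contradiction; therefore $E_{2k,1}$ is not slice. The only geometric content is the construction of $A'$ and the computation of its homology class, but this is a direct generalization of the $k=1$ case already settled in Theorem~\ref{thm:fig8}, so I anticipate no real obstacle there. The whole corollary reduces to the numerical observation that the cabled-annulus construction produces a surface of genus exactly one less than the conjecturally minimal value.
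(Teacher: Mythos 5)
Your argument is correct and is essentially identical to the paper's: the authors likewise cable the annulus $A$ to obtain an annulus in $2\mathbb{CP}^2\smallsetminus(\mathring{B}^4\sqcup\mathring{B}^4)$ cobounded by $E_{2k,1}$ and $T(2k,1-20k)$, cap off with a slice disk and a genus-$(20k^2-12k+1)$ surface, and compare against the value $20k^2-12k+2$ from Equation~\eqref{eq:cpxgenus}. All of your numerical computations (the $-10$ framing, the torus knot parameters, and both genus counts) match the paper's.
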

Note that the construction of \cite{mmrs} does not yield a small-genus surface in the homology class $(2k,6k)$, since $6k=3\cdot 2k$.


\begin{remark}
    The method of \cite{JB} applies to the 
    class $(2k,6k)\in H_2(2\mathbb{CP}^2;\mathbb{Z})$. When $k$ is odd, if $\Sigma$ represents $(2k,6k)$ then we obtain 
    $g(\Sigma)\geq 12.5 k^2-2.5$. This lower bound is much smaller than the
    genus of the connected-sum of the complex curves shown in 
    Equation~\eqref{eq:cpxgenus}. It would be interesting 
    to determine the value of the minimal genus function for 
    these homology classes. 
\end{remark}


\section{Small genus surfaces in rational homology $2\mathbb{CP}^2$}
The above results can be interpreted in terms of minimal genus functions
of closed 4-manifolds introduced at the beginning
of Section~\ref{sec:background}.
In this language, Theorem~\ref{thm:JBresult} says that
$g_{2{\mathbb {CP}}^2}(2,6)=10$, and $g_{2{\mathbb {CP}}^2} 
(1,3)=1$. 
For some further values of $g_{2{\mathbb {CP}}^2}$ 
see \cite{JB}.

It is known that the figure eight knot $E$ is \emph{rationally slice},
that is, there is a rational homology 4-ball $Z$ with boundary $S^3$ in which
$E$ bounds a smoothly embedded disk (see, e.g., \cite{Kawauchi:1979-1, Kawauchi:2009-1, Six,Levine:2022} for an explicit construction of $Z$). 
Define the closed 4-manifold $M$ as
\[
M=Z\cup _{S^3}\left(2{\mathbb {CP}}^2\smallsetminus \mathring{B}^4\right).
\]
It follows that $H_2(M; \Z )=\Z \oplus \Z$ and $H_1(M; \Z )=\Z/2\Z$.
As $E$ is slice in $Z$, our previous construction shows that
\[
g_M(1,3)=0.
\]
As the (2,1)-cable of $E$ is also slice in $Z$ (by cabling the slice disk),
our new proof of Theorem~\ref{thm:fig8} can be interpreted as
\[
g_M(2,6)\leq 9.
\]

In short, the comparison of $g_{2{\mathbb {CP}}^2}$ and of $g_M$
provides the fundamental idea of our new proof of Theorem~\ref{thm:fig8}.
It would be interesting to find further discrepancies between the values
of $g_{2{\mathbb {CP}}^2}$ and of $g_M$ on pairs of
integers $(a,b)\in \Z \oplus \Z$.

\begin{problem}\label{prob}
For each $m,n>0$ with $g_{2\mathbb{CP}^2}(m,n)>0$, construct a 4-manifold $X_{(m,n)}$ which is a rational homology $2\mathbb{CP}^2$ such that there exists a smooth surface $\Sigma$ embedded in $X_{(m,n)}$ with $[\Sigma]=(m,n)\in H_2(X_{(m,n)};\mathbb{Z})$ and $g(\Sigma)<g_{2\mathbb{CP}^2}(m,n).$
\end{problem}

A stronger version of Problem~\ref{prob} would be to find a 4-manifold $X$ which is a rational homology $2\mathbb{CP}^2$ so that for {\emph{every pair}} $m,n>0$ with $g_{2\mathbb{CP}^2}(m,n)>0$, there exists a smooth surface $\Sigma$ in $X$ with $[\Sigma]=(m,n)\in H_2(X;\mathbb{Z})$ and $g(\Sigma)<g_{2\mathbb{CP}^2}(m,n)$.

\begin{remark}
Notice that the Poincar\'e dual $c$ of the class $(1,3)\in H_2(M; \Z )$
satisfies
\[
c(\alpha )\equiv Q_M (\alpha , \alpha ),
\]
but (as $M$ has nontrivial 2-torsion in its first homology and therefore in its second cohomology) this congruence does
not imply that the mod 2 reduction of $c$ is equal to $w_2(M)\in H^2(M; \Z/2\Z)$. In particular, the
complement of the sphere in $M$ we found representing $(1,3)$ is not spin, hence
the contradiction shown in $2{\mathbb {CP}}^2$ (relying on Rokhlin's theorem)
does not apply in this context.
\end{remark}

\subsection*{Acknowledgments}The authors would like to thank the American Institute of Mathematics (AIM); this project began at the November 2022 meeting of the ``Fibered ribbon knots and Casson-Gordon exotic 4-spheres" SQuaRE (comprised of the five authors).

\bibliographystyle{alpha} 
\bibliography{biblio}
\end{document}